\documentclass[a4paper,10pt]{amsart}

\usepackage[english]{babel}
\usepackage[utf8]{inputenc}

\usepackage{amssymb}
\usepackage{amsmath}
\usepackage{amsthm}
\usepackage{bbm}

\usepackage{enumerate}
\usepackage{tikz}
\usepackage{marginnote}


\newcommand{\bbN}{\mathbb{N}}

\newcommand{\bbR}{\mathbb{R}}

\newcommand{\calJ}{\mathcal{J}}

\newcommand{\calL}{\mathcal{L}}

\newcommand{\calT}{\mathcal{T}}

\renewcommand{\epsilon}{\varepsilon}

\DeclareMathOperator{\one}{\mathbbm{1}}

\newcommand{\argument}{\mathord{\,\cdot\,}}
\newcommand{\dx}{\;\mathrm{d}}

\newcommand{\norm}[1]{\left\lVert #1 \right\rVert}
\newcommand{\modulus}[1]{\left\lvert #1 \right\rvert}

\newcommand{\hmax}{h_{\max}}



\newcommand{\spec}{\sigma}
\newcommand{\Res}{\mathcal{R}}


\theoremstyle{definition}
\newtheorem{definition}{Definition}[section]

\newtheorem{remarks}[definition]{Remarks}

\theoremstyle{plain}

\newtheorem{lemma}[definition]{Lemma}
\newtheorem{theorem}[definition]{Theorem}
\newtheorem{corollary}[definition]{Corollary}

\numberwithin{equation}{section}

\begin{document}

\title[Uniform convergence of stochastic semigroups]{Uniform convergence of stochastic semigroups}

\author{Jochen Gl\"uck}
\address{Jochen Gl\"uck, Fakult\"at f\"ur Informatik und Mathematik, Universit\"at Passau, Innstr.\ 33, 94032 Passau, Germany}
\email{jochen.glueck@uni-passau.de}

\author{Florian G.\ Martin}
\address{Florian G.\ Martin, Institut f\"ur Angewandte Analysis, Universit\"at Ulm, 89069 Ulm, Germany}
\email{florian.martin@alumni.uni-ulm.de}

\subjclass[2010]{47D06; 47D07}

\date{\today}

\begin{abstract}
	For stochastic $C_0$-semigroups on $L^1$-spaces there is wealth of results that show strong convergence to an equilibrium as $t \to \infty$, given that the semigroup  contains a partial integral operator. This has plenty of applications to transport equations and in mathematical biology. However, up to now partial integral operators do not play a prominent role in theorems which yield uniform convergence of the semigroup rather than only strong convergence.
	
	In this article we prove that, for irreducible stochastic semigroups, uniform convergence to an equilibrium is actually equivalent to being partially integral and uniformly mean ergodic. In addition to this Tauberian theorem, we also show that our semigroup is uniformly convergent if and only if it is partially integral and the dual semigroup satisfies a certain irreducibility condition. Our proof is based on a uniform version of a lower bound theorem of Lasota and Yorke, which we combine with various techniques from Banach lattice theory.
\end{abstract}

\maketitle

\section{Introduction and main result} \label{section:introduction}

\subsection*{Strong convergence of partially integral semigroups}

Consider a stochastic $C_0$-semigroup $\calT = (T_t)_{t \in [0,\infty)}$ on $L^1 := L^1(\Omega,\mu)$ over a $\sigma$-finite measure space $(\Omega,\mu)$; by \emph{stochastic} we mean that $T_t f \ge 0$ and $\norm{T_tf} = \norm{f}$ for each time $t \ge 0$ and each function $0 \le f \in L^1$. Such stochastic semigroups occur frequently in models in, for instance, mathematical biology, transport processes and queuing systems. In such models one is often interested in the long-term behaviour of the semigroup $\calT$, and in this context the notions \emph{integral operator} and \emph{partial integral operator} turned out to be very useful:

A bounded linear operator $J: L^1 \to L^1$ is called an \emph{integral operator} if there exists a measurable function $k: \Omega \times \Omega \to \bbR$ such that the following is true for each $f \in L^1$: the function $k(\omega, \argument) f(\argument)$ is in $L^1$ for almost every $\omega \in \Omega$, and the equality
\begin{align*}
	Jf = \int_\Omega k(\argument, \omega)f(\omega) \dx \mu(\omega)
\end{align*}
holds in $L^1$. A positive (hence bounded) linear operator $R: L^1 \to L^1$ is called a \emph{partial integral operator} if there exists a non-zero integral operator $J$ such that $0 \le J \le R$. Sometimes, integral operators are also called \emph{kernel operators} and likewise, partial integral operators are called \emph{partial kernel operators}.

The point is that if one of the operators $T_t$ in the stochastic semigroup $\calT$ is partially integral, this has remarkable consequences for the long-term behaviour of $\calT$. An important result in this direction was proved in 2000 by Pichor and Rudnicki \cite[Theorem~1]{Pichor2000} who showed that, if $\calT$ is irreducible and has a non-zero fixed point, then $T_t$ converges strongly as $t \to \infty$ if at least one of the operators $T_t$ is partially integral. Various modifications and generalisations of this result have appeared since (see for instance \cite{Pichor2016, Pichor2018}). Strong convergence results of this type have proved to be useful in numerous applications, in particular in mathematical biology; see for instance \cite{Banasiak2012, Bobrowski2007, Du2011, Mackey2013, PichorPreprint}, to mention just a few of them. For an application in the theory of kinetic equations we refer to \cite{Mokhtar-Kharroubi2017}.

\subsection*{Uniform convergence}

In the study of kinetic equations, one is often interested in \emph{uniform convergence} (i.e., convergence with respect to the operator norm) rather than merely strong convergence of $T_t$ as $t \to \infty$. As of today, the authors are not aware of any uniform convergence theorem that is specifically designed for semigroups that contain a partial integral operator. In fact, typical results in the study of kinetic equations use compactness properties to show that the \emph{essential growth bound} of $\calT$ is strictly negative; uniform convergence of the semigroup follows then easily from Perron--Frobenius theory. We refer for instance to \cite[Chapter~2]{Mokhtar-Kharroubi1997} and \cite[Sections~3 and~4]{Mokhtar-Kharroubi2014} for a detailed description of such an approach.

Our main result (Theorem~\ref{thm:main}) gives a criterion for uniform convergence that does not rely on knowledge of the essential spectral bound.

\subsection*{Contributions of this article}

The purpose of this article is to prove the following theorem. We recall that a semigroup $\calT = (T_t)_{t \in [0,\infty)}$ (be it strongly continuous or not) on a Banach lattice $E$ is called \emph{irreducible} if the only closed $\calT$-invariant ideals in $E$ are $\{0\}$ and $E$. Irreducibility is equivalent to the condition that, for each non-zero vector $0 \le f \in E$ and each non-zero functional $0 \le \varphi \in E'$ there exists a time $t \ge 0$ such that $\langle \varphi, T_t f \rangle > 0$.

We denote the space of bounded linear operators on a Banach space $E$ by $\calL(E)$.

\begin{theorem} \label{thm:main}
	Let $\calT = (T_t)_{t \in [0,\infty)}$ be a stochastic and irreducible $C_0$-semigroup with generator $A$ on $L^1 := L^1(\Omega,\mu)$ for a $\sigma$-finite measure space $(\Omega,\mu)$. Then the following assertions are equivalent:
	\begin{enumerate}[\upshape (i)]
		\item $T_t$ converges with respect to the operator norm as $t \to \infty$.
		
		\item The number $0$ is a pole of the resolvent of $A$; moreover, there exists a time $t_0 \in [0,\infty)$ and a non-zero integral operator $J \in \calL(L^1)$ such that $0 \le J \le T_{t_0}$.
		
		\item The number $0$ is a pole of the resolvent of $A$; moreover, there exists a time $t_0 \in [0,\infty)$ and a non-zero compact operator $K \in \calL(L^1)$ such that $0 \le K \le T_{t_0}$. 
	\end{enumerate}
\end{theorem}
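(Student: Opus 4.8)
\emph{Strategy.} I would prove the cycle by first disposing of the equivalence of the two ``smallness'' conditions (ii) $\Leftrightarrow$ (iii) by pure Banach lattice theory, and then treat the substantial equivalence (i) $\Leftrightarrow$ (ii), whose reverse implication carries the analytic weight.

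\emph{The equivalence (ii) $\Leftrightarrow$ (iii).} For (ii) $\Rightarrow$ (iii) I would exhibit a nonzero compact operator below the given integral operator: if $J$ has kernel $k \ge 0$, there is a rectangle $A \times B$ of finite positive measure and a constant $c > 0$ with $k \ge c$ on $A \times B$, and the rank-one operator $f \mapsto c \langle \one_B, f\rangle \one_A$ is then compact, nonzero and dominated by $J \le T_{t_0}$. For (iii) $\Rightarrow$ (ii) the point is that a \emph{positive compact} operator on $L^1$ is automatically a kernel operator: since $L^1$ has order continuous norm, an order bounded sequence converging to $0$ in measure converges weakly to $0$, and a compact operator maps it to a norm-null (hence almost everywhere null along subsequences) sequence, so Bukhvalov's characterisation of kernel operators applies. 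Thus one may take $J = K$.

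\emph{The implication (i) $\Rightarrow$ (ii).} Uniform convergence $T_t \to P$ forces $P$ to be a positive projection onto the fixed space of $\calT$; by stochasticity $P \ne 0$ and mass conservation identifies the fixing functional as $\one$, while irreducibility makes the fixed space one-dimensional, spanned by a strictly positive density $h$. Hence $P = h \otimes \one$ is a rank-one \emph{kernel} operator with $\norm{P} = 1$. That $0$ is a (first-order) pole of the resolvent of $A$ is the classical spectral content of norm convergence of a bounded semigroup. To extract the integral operator below some $T_{t_0}$ I would use that $L^1$ is an AL-space, so $\calL(L^1) = \calL^r(L^1)$ with $\norm{\modulus{S}} = \norm{S}$, and that the kernel operators form a projection band $\calK$ in the Dedekind complete lattice $\calL^r(L^1)$. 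Writing $\pi_\calK$ for the band projection, set $J := \pi_\calK T_{t_0}$, which satisfies $0 \le J \le T_{t_0}$ and is a kernel operator. Since $\pi_\calK P = P$ and $\pi_\calK$ is a contraction, $\norm{P - J} = \norm{\pi_\calK(P - T_{t_0})} \le \norm{T_{t_0} - P}$, which is $< \norm{P}$ once $t_0$ is large; thus $J \ne 0$.

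\emph{The implication (ii) $\Rightarrow$ (i), and the main obstacle.} Here I would again produce the rank-one residue $P = h \otimes \one$ at the pole $0$ (the pole guarantees $0$ is an eigenvalue, whose eigenvector is a strictly positive density by irreducibility, and irreducibility together with mass conservation pins down $P$). The task is then to upgrade to the norm estimate $\norm{T_t - P} \to 0$, equivalently $\norm{T_t(\id - P)} \to 0$. This is where partial integrality enters: I expect to invoke the uniform version of the Lasota--Yorke lower bound theorem to obtain, for some $t_1$, a genuine contraction $\norm{T_{t_1}(\id - P)} < 1$, i.e.\ a Doeblin-type uniform minorisation furnished by the dominated integral operator. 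Combined with the pole at $0$ --- which isolates $0$ in $\spec(A)$ and, through the cyclicity of peripheral spectra of positive operators, reduces the peripheral spectrum of $T_{t_1}$ to eigenvalues that irreducibility and partial integrality force to be $\{1\}$ --- this yields a spectral gap $\re \lambda < 0$ for the part of $A$ on the range of $\id - P$, and Gelfand's formula then gives geometric, hence uniform, decay of $\norm{T_{nt_1}(\id - P)}$; uniform boundedness on $[0,t_1]$ closes the argument. The genuine difficulty is precisely this upgrade from the strong convergence of Pichor--Rudnicki type to norm convergence: strong stability alone never yields a spectral gap, so the whole weight rests on converting partial integrality into the \emph{uniform} contraction $\norm{T_{t_1}(\id - P)} < 1$, which is the role of the uniform lower bound theorem, with the pole supplying the complementary spectral-gap information.
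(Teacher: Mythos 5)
Your treatment of (i) $\Rightarrow$ (ii) is essentially the paper's argument (the paper takes $J := T_{t_0}\wedge P$ where you take the band projection of $T_{t_0}$ onto the kernel operators; both exploit that the kernel operators form a band in the Banach lattice $\calL(L^1)$ containing the rank-one limit $P$), and you correctly locate the analytic weight in the passage from partial integrality plus the pole at $0$ to uniform convergence. But there are genuine gaps elsewhere. Your (ii) $\Rightarrow$ (iii) rests on the claim that a non-negligible kernel $k\ge 0$ satisfies $k\ge c>0$ on some rectangle $A\times B$ of positive finite measure; this is false. A measurable set of positive measure in a product space need not contain any measurable rectangle of positive measure (take $S=\{(x,y):x-y\notin U\}$ for a dense open $U$ of small measure: if $A\times B\subseteq S$ with $\lvert A\rvert,\lvert B\rvert>0$, Steinhaus's theorem puts an interval inside $A-B\subseteq U^c$, a contradiction), and $k=\one_S$ is the kernel of a bounded positive operator on $L^1[0,1]$ dominating no operator of your proposed form. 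The paper instead picks a positive finite-rank $G$ with $J\wedge G\neq 0$ and uses that the \emph{square} of an operator dominated by a compact operator is compact, plus an irreducibility argument to keep that square non-zero. Your (iii) $\Rightarrow$ (ii) is also flawed: for an order bounded sequence $f_n\to 0$ in measure one already has $\norm{f_n}_1\to 0$ by dominated convergence, so $Kf_n\to 0$ in norm for \emph{every} bounded operator $K$; this gives a.e.\ convergence only along subsequences, whereas Bukhvalov's criterion demands a.e.\ convergence of the whole sequence --- the subsequence version is satisfied by the identity, which is not a kernel operator. (The paper sidesteps this direction entirely by closing the cycle as (iii) $\Rightarrow$ (i) $\Rightarrow$ (ii).)

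The main gap is in (iii) $\Rightarrow$ (i). A single non-zero compact operator $K\le T_{t_0}$ does not furnish a Doeblin-type uniform minorisation: $Kf$ can be arbitrarily small, or zero, for many normalized $f\ge 0$, so no contraction $\norm{T_{t_1}(\id-P)}<1$ follows from it directly. Nor is the uniform Lasota--Yorke theorem a device for \emph{producing} such a contraction --- its conclusion is already uniform convergence, but only after a non-zero uniform lower bound has been exhibited, and constructing that lower bound is exactly the missing content. The paper does it by decomposing $T_t=K_t+R_t$ with $K_t:=KT_{t-t_0}$, using uniform convergence of the Ces\`aro means $C_t\to P=\one\otimes g$ (this is where the pole enters) to prove $\norm{R_{t_1}C_{t_1}}\le 1-\delta/2$ with $\delta=\norm{Kg}>0$, extracting for each normalized $f\ge 0$ a time $s_f\in[t_1,2t_1]$ with $\norm{K_{s_f}f}\ge\delta/2$, and finally using compactness of $K$ (so that the strong convergence $T_t\to P$ is uniform on the image of the positive unit sphere under $K$) to conclude that $\tfrac{\delta}{2}g$ is a uniform lower bound. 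None of these steps appears in your sketch, and the spectral-gap/Gelfand machinery you append is redundant once the lower bound theorem applies and unavailable before it does.
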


We prove this theorem in Section~\ref{section:proof-of-the-main-result}. In concrete applications it might sometimes be easier to replace the assumption that $0$ be a pole of the resolvent with an equivalent condition. Let us list several such conditions in the following corollary. For a function $g \in L^\infty(\Omega,\mu)$ we write $g \gg 0$ if there exists $\varepsilon > 0$ such that $g \ge \varepsilon \one$.

\begin{corollary} \label{cor:spectral-condition}
	Let $\calT = (T_t)_{t \in [0,\infty)}$ be a stochastic and irreducible $C_0$-semigroup with generator $A$ on $L^1(\Omega,\mu)$ for a $\sigma$-finite measure space $(\Omega,\mu)$. If there exists a time $t_0 \in [0,\infty)$ such that $T_{t_0}$ dominates a non-zero positive integral operator or a non-zero positive compact operator, then the following assertions are equivalent:
	\begin{enumerate}[\upshape (i)]
		\item $T_t$ converges with respect to the operator norm as $t \to \infty$.
		
		\item The Ces\`{a}ro means $C_t := \frac{1}{t} \int_0^t T_s \dx s$ converge uniformly as $t \to \infty$ (i.e., $\calT$ is uniformly mean ergodic).
		
		\item The number $0$ is a pole of the resolvent $\Res(\argument,A)$.
		
		\item The dual semigroup $\calT' := (T_t')_{t \in [0,\infty)}$ on $L^\infty(\Omega,\mu)$ is irreducible.
		
		\item For each non-zero $0 \le f \in L^\infty(\Omega,\mu)$ there exists a number $\lambda > 0$ such that $\Res(\lambda,A)'f \gg 0$.
		
		\item For each non-zero $0 \le f \in L^\infty(\Omega,\mu)$ and each number $\lambda > 0$ we have $\Res(\lambda,A)'f \gg 0$.
	\end{enumerate}
\end{corollary}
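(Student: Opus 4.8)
The plan is to deduce the corollary from Theorem~\ref{thm:main} together with the standard Perron--Frobenius theory of irreducible positive semigroups, by organising the six conditions into the blocks (i)--(iii) and (iv)--(vi) and then bridging them. For the first block, note that since the domination hypothesis is now a standing assumption, Theorem~\ref{thm:main} immediately yields the equivalence (i) $\Leftrightarrow$ (iii). The implication (i) $\Rightarrow$ (ii) is routine: uniform convergence $T_t \to P$ forces the Ces\`aro means $C_t$ to converge to the same limit in operator norm. For (ii) $\Rightarrow$ (iii) I would invoke the uniform mean ergodic theorem, which states that $C_t$ converges in operator norm if and only if $0 \in \rho(A)$ or $0$ is a first-order pole of $\Res(\argument,A)$; since $\calT$ is stochastic we have $T_t'\one = \one$, hence $0 \in \spec(A)$, which rules out the first alternative and gives (iii). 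The converse direction of the same ergodic theorem also records that, for an irreducible positive semigroup, a pole at the spectral bound $s(A)=0$ is automatically simple; this is what reconciles the bare condition (iii) with the first-order pole appearing in (ii).

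For the dual block I would first observe that, in $L^\infty(\Omega,\mu)$, a positive element is a quasi-interior point of the positive cone precisely when it is $\gg 0$ in the sense of the corollary. Thus (v) and (vi) say exactly that the dual resolvent $\Res(\lambda,A)' = \Res(\lambda,A')$ maps every nonzero positive function to a quasi-interior point, for one respectively all $\lambda > 0 = s(A')$, and the equivalence (iv) $\Leftrightarrow$ (v) $\Leftrightarrow$ (vi) becomes the familiar resolvent characterisation of irreducibility applied to $\calT'$. The one point requiring care is that $\calT'$ is not strongly continuous; here I would use that every norm-closed ideal of $L^\infty$ is a band and hence weak*-closed, so that such an ideal is automatically invariant under the weak*-integral $\Res(\lambda,A')f = \int_0^\infty e^{-\lambda t}T_t'f\dx t$ as soon as it is $\calT'$-invariant. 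With this observation the usual proof of the characterisation carries over.

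It remains to connect the two blocks. The implication (i) $\Rightarrow$ (vi) is direct: if $T_t \to P$ uniformly, then by irreducibility and simplicity of the pole one has $P = h \otimes \one$ with a strictly positive invariant density $h$, and stochasticity identifies the dual fixed functional as $\one$. Hence $T_t'f \to (\int f h \dx\mu)\,\one \gg 0$ in $L^\infty$ for every nonzero $0 \le f$; since $T_t'f \ge 0$ and the limit is an order unit, we get $T_t'f \ge \tfrac12(\int f h\dx\mu)\one$ for all large $t$, and integrating the Laplace transform yields $\Res(\lambda,A)'f \gg 0$ for every $\lambda > 0$. Together with the block equivalence this shows that (i) implies all of (iv)--(vi).

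The main obstacle is the reverse bridge, namely that dual irreducibility forces the pole, say (iv) $\Rightarrow$ (iii) (equivalently (vi) $\Rightarrow$ (i)). Here I would reinterpret (vi) as a \emph{uniform} lower bound: the estimate $\Res(\lambda,A)'\one_B \ge \epsilon_B \one$ is equivalent to $\langle \one_B, \Res(\lambda,A)g\rangle \ge \epsilon_B \norm{g}_1$ uniformly over $0 \le g \in L^1$, so the resolvent pushes a uniformly positive amount of mass into every set $B$ of finite positive measure. This is precisely the kind of input handled by the uniform version of the Lasota--Yorke lower bound theorem underlying the proof of Theorem~\ref{thm:main}: combined with the domination hypothesis (which supplies the partial integral structure) and irreducibility of $\calT$, such a uniform lower bound should exclude the dissipative/sweeping alternative and force uniform convergence, hence the pole. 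Making this passage rigorous --- extracting the hypotheses of the uniform lower bound theorem from condition (vi) while respecting the lack of strong continuity of $\calT'$ --- is the step I expect to be the most delicate.
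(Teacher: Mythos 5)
Your first block is sound: (i)~$\Leftrightarrow$~(iii) via Theorem~\ref{thm:main} and (i)~$\Rightarrow$~(ii)~$\Rightarrow$~(iii) via the uniform mean ergodic theorem is exactly the paper's route, and your direct argument for (i)~$\Rightarrow$~(vi) also works. But there are two genuine gaps. The first is your claim that every norm-closed ideal of $L^\infty$ is a band and hence weak*-closed, which you use to push the weak*-integral $\Res(\lambda,A)'f = \int_0^\infty e^{-\lambda t}T_t'f \dx t$ into a $\calT'$-invariant ideal. This is false: $c_0$ is a norm-closed ideal of $\ell^\infty$ that is neither a band nor weak*-closed, and non-atomic $L^\infty$-spaces have a great many more closed non-band ideals (the paper itself remarks that irreducibility of $\calT'$ is a strong condition precisely because $L^\infty$ has so many closed ideals). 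So the ``familiar resolvent characterisation of irreducibility'' does not transfer to $\calT'$ the way you describe. The paper avoids integrating into a given ideal altogether: from the Laplace representation one gets the domination $T_t'\Res(\lambda,A)'f \le e^{\lambda t}\Res(\lambda,A)'f$, which shows directly that the closed principal ideal generated by $\Res(\lambda,A)'f$ is $\calT'$-invariant, giving (iv)~$\Rightarrow$~(vi); the implication (v)~$\Rightarrow$~(vi) is then obtained from the resolvent identity together with analyticity of the resolvent.

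The second and more serious gap is the reverse bridge (vi)~$\Rightarrow$~(iii), which you correctly identify as the crux but do not prove. Your sketch --- reading (vi) as a uniform lower bound and invoking Theorem~\ref{thm:lasota-yorke-uniform} --- does not go through as stated: condition (vi) only yields, for each set $B$ of finite positive measure, a constant $\epsilon_B>0$ with $\langle \one_B, \Res(\lambda,A)g\rangle \ge \epsilon_B \norm{g}$ for all $0 \le g$, and these constants degenerate as $B$ shrinks; there is no visible candidate for a single non-zero $h \in L^1$ with $\sup\{\norm{(T_tf-h)^-} : 0 \le f,\ \norm{f}=1\} \to 0$, which is what the lower bound theorem requires (set-wise mass estimates at the resolvent level are much weaker than an $L^1$ minorant of the orbits). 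The paper closes the loop by a purely spectral argument that needs neither lower bounds nor even the domination hypothesis: under (vi) the operator $\Res(1,A)'$ is positive, irreducible and has spectral radius $1$, so by the Niiro--Sawashima/Lotz--Schaefer theory $1$ is a pole of its resolvent; transferring back to $\Res(1,A)$ and then to $A$ shows that $0$ is a pole of $\Res(\argument,A)$, i.e.\ (iii), whence (i) follows from Theorem~\ref{thm:main}. Without this (or some substitute), your argument does not establish the equivalence of (iv)--(vi) with (i)--(iii).
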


The integral in the definition of the Ces\`{a}ro means $C_t$ is meant in the strong sense. We point out that the dual semigroup $\calT'$ need not be $C_0$, in general (in fact, there are no $C_0$-semigroups on an $L^\infty$-space except for those with bounded generator \cite[Theorem~A-II-3.6]{Arendt1986}). Moreover, we note that irreducibility is a rather strong property on $L^\infty(\Omega,\mu)$ because this space contains a lot of closed ideals. If $L^1(\Omega,\mu)$ is separable, then compact operators on $L^1(\Omega,\mu)$ are automatically integral operators \cite[Theorem~10 on p.~507]{Dunford1958}, so that the assumption in Corollary~\ref{cor:spectral-condition} can than be slightly simplified.

We defer the proof of Corollary~\ref{cor:spectral-condition} to Section~\ref{section:proof-of-spectral-proposition}.

\begin{remarks}
	\begin{enumerate}[(a)]
		\item The implication ``(ii) $\Rightarrow$ (i)'' in Corollary~\ref{cor:spectral-condition} is a Tauberian theorem since it concludes convergence of the semigroup from convergence of its Ces\`aro means.
		
		\item Theorem~\ref{thm:main} can in particular be applied to stochastic semigroups on $\ell^1$ since every bounded linear operator on $\ell^1$ is an integral operator; so if $0$ is a pole of the resolvent for such a semigroup, then the semigroup converges uniformly as $t \to \infty$.
		
		This observation is not new, though; it can easily be derived from a classical theorem of Williams \cite[Assertion~1 of the Theorem in Section~2]{Williams1967}.
		
		\item The most interesting interesting implications in the theorem are certainly those from~(ii) or~(iii) to~(i). Concerning these implications, a few remarks are in order, so assume that~(i) or~(ii) holds. Then it follows from a Niiro--Sawashima type result that all spectral values of $A$ on the imaginary axis in fact poles of the resolvent $\Res(\argument,A)$ \cite[Theorem~C-III-3.12]{Arendt1986} and thus, in particular, eigenvalues. But due to the domination of a non-zero integral operator or a non-zero compact operator, $A$ cannot have eigenvalues in $i\bbR \setminus \{0\}$ \cite[Theorem~5.4]{Gerlach2019}. Hence, $\spec(A) \cap i\bbR = \{0\}$.
		
		So the main point Theorem~\ref{thm:main} is not the triviality of the peripheral spectrum of $A$; instead, it is (a) that we do not know a priori that $\spec(A) \setminus \{0\}$ is bounded away from the imaginary axis and (b) that we do not have a spectral mapping theorem available which could be used to derive the behaviour of the semigroup from the behaviour of the spectrum.
	\end{enumerate}
\end{remarks}

\subsection*{On (non-)$\sigma$-finite measure spaces}

Throughout the paper we assume all occurring measure spaces to be $\sigma$-finite -- but this is merely a matter of notational convenience rather than of theoretical necessity. In fact, much of what we have to say can also be formulated, and proved, on so-called \emph{AL-Banach lattices} (which are essentially the same thing as $L^1$-spaces over arbitrary measure spaces). But in order to do so one needs to be careful to use the appropriate definitions of all objects that occur in our treatment. To give just one example (there are several of them), one has to replace $L^\infty(\Omega,\mu)$ with the dual space of $L^1(\Omega,\mu)$, since those spaces do not necessarily coincide in the non-$\sigma$-finite case.

\subsection*{Other spaces than $L^1$}

Strong convergence results for partially integral semigroups hold in more general spaces than $L^1$. For instance, on Banach lattices with order continuous norm (which include, in particular, all $L^p$-spaces for $1 \le p < \infty$), similar results as on $L^1$ can be proved (see \cite[Theorem~4.2]{Gerlach2013} and \cite[Theorem~2]{Gerlach2017}), and in fact, one can even dispense with the time continuity assumption on the semigroup, see \cite[Section~4.2]{Gerlach2019} and \cite{Glueck2019}. Finally, it is also worthwhile to mention the special case where the semigroup consists entirely of integral operators; this situation occurs in the study of partial differential equations (see \cite[Section~5]{Arendt2008}), and convergence results for this case can be found in \cite[Korollar~3.11]{Greiner1982a}, \cite[Theorem~1]{Gerlach2017} and \cite[Section~4.1]{Gerlach2019}.

For uniform convergence the situation is less clear, though: we do not know whether our main results remains true on, say, $L^p$-spaces for $p \in (1,\infty)$, and even on $L^1$ we do not know whether one can drop the condition that the semigroup be stochastic. Our proofs make heavy use of both the $L^1$-structure of the space and of the assumption that the semigroup be stochastic.

\subsection*{Prerequisites}

We assume the reader to be familiar with the basic theory of Banach lattices (see for instance \cite{Schaefer1974} or \cite{Meyer-Nieberg1991}) and with standard $C_0$-semigroup theory (see for instance \cite{Pazy1983} or \cite{Engel2000}). We recall a few basic notions, though, as we proceed. Let us point out that we use the standard terminology from Banach lattice theory where \emph{positive} always means ``$\ge 0$'' (i.e., positivity does not mean a strict inequality in any sense).

\subsection*{Organisation of the article}

The main purpose of this article is to prove Theorem~\ref{thm:main}. In order to keep the article short and concise, we shall not discuss any applications in detail here. At the end of the article (Section~\ref{section:outlook}) we give a brief outlook, though, where we outline several potential applications.

The proof of the implication ``(iii) $\Rightarrow$ (i)'' in Theorem~\ref{thm:main} relies on a uniform version of a lower bound convergence theorem which was, in \cite[Corollary~3.6]{GerlachLB}, derived from a strong convergence version of the same result by means of an ultrapower technique. Since this uniform lower bound theorem is an important ingredient in the proof of our main result, we include a more elementary proof of it in Section~\ref{section:a-lower-bound-theorem}. The proof of Theorem~\ref{thm:main} is given in Section~\ref{section:proof-of-the-main-result}, and the proof of Corollary~\ref{cor:spectral-condition} can be found in Section~\ref{section:proof-of-spectral-proposition}.

\section{A lower bound theorem} \label{section:a-lower-bound-theorem}

Let us consider a stochastic semigroup $\calT = (T_t)_{t \in (0,\infty)}$ on $L^1 := L^1(\Omega,\mu)$ over a $\sigma$-finite measure space $(\Omega,\mu)$. A function $0 \le h \in L^1$ is called a \emph{lower bound} for $\calT$ if, for every function $0 \le f \in L^1$ of norm $1$, the orbit $(0,\infty) \ni t \mapsto T_t f \in L^1(\Omega,\mu)$ \emph{asymptotically dominates} $h$, meaning that $\norm{(T_t f - h)^-} \to 0$ as $t \to \infty$. It was first shown by Lasota and Yorke \cite[Theorem~2 and Remark~3]{Lasota1982} that $T_t$ converges strongly to a rank-$1$ projection as $t \to \infty$ if and only if there exists a non-zero lower bound for the semigroup $\calT$. For the proof of our main theorem we need a uniform version of this result; to this end, we introduce the following terminology.

\begin{definition} \label{def:uniform-lower-bounds}
	Let $J = \bbN_0$ or $J = [0,\infty)$ and let $\calT = (T_t)_{t \in J}$ be a stochastic operator semigroup on $L^1 := L^1(\Omega,\mu)$ over a $\sigma$-finite measure space $(\Omega,\mu)$. Let $0 \le h \in L^1$. If
	\begin{align*}
		\sup\left\{ \norm{(T_t f - h)^-}: \; 0 \le f \in L^1, \; \norm{f} = 1 \right\} \to 0 \qquad \text{as} \qquad t \to \infty,
	\end{align*}
	then the function $h$ is called a \emph{uniform lower bound} for $\calT$.
\end{definition}

Here is a uniform version of the Lasota--Yorke lower bound theorem.

\begin{theorem} \label{thm:lasota-yorke-uniform}
	Let $\calJ = \bbN_0$ or $\calJ = [0,\infty)$ and let $\calT = (T_t)_{t \in \calJ}$ be a stochastic operator semigroup on $L^1 := L^1(\Omega,\mu)$ over a $\sigma$-finite measure space $(\Omega,\mu)$. The following assertions are equivalent:
	\begin{enumerate}[\upshape (i)]
		\item $T_t$ converges with respect to the operator norm as $t \to \infty$ and the limit operator has rank $1$.
		\item There exists a non-zero uniform lower bound $0 \le h \in L^1$ for $\calT$.
	\end{enumerate}
\end{theorem}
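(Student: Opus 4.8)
The plan is to prove the two implications separately; ``(i) $\Rightarrow$ (ii)'' is short, whereas ``(ii) $\Rightarrow$ (i)'' carries the weight of the argument. Throughout I call a function $0 \le f \in L^1$ with $\norm f = 1$ a \emph{density}, and I use repeatedly that every $T_t$ is an $L^1$-contraction (positivity together with isometry on the positive cone gives $\norm{T_t x} \le \norm{T_t \modulus{x}} = \norm{\modulus{x}} = \norm x$). For ``(i) $\Rightarrow$ (ii)'' I would argue as follows: if $T_t \to P$ in operator norm with $P$ of rank one, then $P$ is positive and $\norm{Pf} = \norm f$ for all $0 \le f$. Writing the positive rank-one operator as $Pf = \langle \psi, f \rangle\, u$ with $0 \le u$ and $0 \le \psi$, the identity $\langle \psi, f\rangle \norm u = \norm f$ for all $0 \le f$ forces $\psi = \one$ and $\norm u = 1$ after normalising $u$, so that $Pf = \big(\int_\Omega f \dx\mu\big) u$ and in particular $Pf = u$ for every density $f$. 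Then $h := u$ is a non-zero uniform lower bound, because for each density $f$ one has $\norm{(T_t f - h)^-} \le \norm{T_t f - Pf} \le \norm{T_t - P}$, and the last quantity tends to $0$ independently of $f$.

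For ``(ii) $\Rightarrow$ (i)'', let $0 \le h$ be a non-zero uniform lower bound. The idea is to control the \emph{spread}
\[
  D(t) := \sup\{\norm{T_t f - T_t g} : f, g \text{ densities}\}
\]
and to show that $D(t) \to 0$. I would first record that $D$ is non-increasing (by contractivity) and that $\norm{p - q} = 2\big(1 - \norm{p \wedge q}\big)$ for any two densities $p, q$. Next I would extract a contraction from the lower bound: fixing $\epsilon > 0$ and choosing $t_0$ with $\norm{(T_t w - h)^-} < \epsilon$ for all densities $w$ and all $t \ge t_0$, the pointwise estimate $T_t w \ge h - (T_t w - h)^-$ gives, for densities $u, v$,
\[
  T_t u \wedge T_t v \ge \big(h - (T_t u - h)^- - (T_t v - h)^-\big)^+ ,
\]
whence $\norm{T_t u \wedge T_t v} \ge \norm h - 2\epsilon$ and thus $D(t) \le 2(1 - \norm h) + 4\epsilon$ for $t \ge t_0$. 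Since $\norm h > 0$, taking $\epsilon$ small yields $D(t_0) < 2$.

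The decisive step is a submultiplicative bound. For densities $f, g$ I would set $w := T_t f - T_t g$; since $\int_\Omega w \dx\mu = 0$ we have $\norm{w^+} = \norm{w^-} = \tfrac12 \norm w$, and normalising $w^+, w^-$ to densities gives $\norm{T_{t+s} f - T_{t+s} g} = \norm{T_s w} \le \tfrac12 \norm{T_t f - T_t g}\, D(s)$, hence $D(t+s) \le \tfrac12 D(t) D(s)$. Thus $E := \tfrac12 D$ is submultiplicative with $E(t_0) < 1$, so $E(n t_0) \le E(t_0)^n \to 0$; monotonicity of $E$ then gives $D(t) \to 0$. Finally I would upgrade this to operator-norm convergence: for a density $f$ and any $s$, the function $T_s f$ is again a density, so $\norm{T_{t+s} f - T_t f} = \norm{T_t(T_s f) - T_t f} \le D(t)$, and splitting a general $x$ into positive and negative parts gives $\norm{T_{t+s} - T_t} \le D(t) \to 0$. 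Hence $(T_t)$ converges in operator norm to some $P$, and $\norm{Pf - Pg} = \lim_t \norm{T_t f - T_t g} = 0$ for densities shows that $P$ sends every density to a single function $h^*$ with $\norm{h^*} = 1$; by linearity $Px = \big(\int_\Omega x \dx\mu\big) h^*$ has rank one.

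The same reasoning covers both $\calJ = \bbN_0$ and $\calJ = [0,\infty)$. I expect the main obstacle to be the middle portion, where the purely qualitative lower bound must be converted into a quantitative, time-uniform contraction of $D$ — specifically the lattice computation behind $D(t_0) < 2$ and the submultiplicative inequality $D(t+s) \le \tfrac12 D(t) D(s)$, which together force exponential decay. Once $D(t) \to 0$ is established, the passage to operator-norm convergence and the identification of the rank-one limit are routine.
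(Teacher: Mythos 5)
Your proof is correct, and while your ``(i) $\Rightarrow$ (ii)'' is essentially the paper's argument, your ``(ii) $\Rightarrow$ (i)'' takes a genuinely different route. The paper stays close to Lasota--Yorke: it first constructs the \emph{largest} uniform lower bound $\hmax$ and shows it is a fixed point of the semigroup, then proves $\norm{\hmax}=1$ by a bootstrap contradiction (otherwise $(1+\delta)\hmax$ would be a strictly larger lower bound), and finally uses mass conservation to upgrade the one-sided estimate $\norm{(T_tf-\hmax)^-}\to 0$ to the two-sided one $\norm{T_tf-\hmax}=2\norm{(T_tf-\hmax)^-}\to 0$. You instead run a Doeblin-type contraction on the spread $D(t)$: the lower bound forces the overlap $\norm{T_tu\wedge T_tv}\ge\norm{h}-2\epsilon$ and hence $D(t_0)<2$, and the submultiplicative inequality $D(t+s)\le\tfrac{1}{2}D(t)D(s)$ --- whose proof correctly exploits that $T_tf-T_tg$ has integral zero, so that its positive and negative parts renormalise to densities --- then forces geometric decay of $D$, after which the Cauchy argument and the identification of the rank-one limit are routine. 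Every step checks out (one should fix $\epsilon<\norm{h}/2$ and take $t_0>0$ so that the iteration $E(nt_0)\le E(t_0)^n$ is non-trivial, but these are cosmetic adjustments). What your route buys is an explicit exponential rate, $\norm{T_t-P}\le D(t)\le 2\gamma^{\lfloor t/t_0\rfloor}$ with $\gamma<1$ controlled by $\norm{h}$, which the paper's proof does not produce; what the paper's route buys is a concrete identification of the limit density as the maximal uniform lower bound, in the spirit of the original Lasota--Yorke argument. Note that both proofs lean heavily on stochasticity: in your case, mass conservation underlies the identity $\norm{p-q}=2(1-\norm{p\wedge q})$ as well as the factor $\tfrac{1}{2}$ in the submultiplicative bound, so neither argument extends as written to merely positive bounded semigroups (for which the paper refers to an ultrapower argument in the literature).
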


In case that $J = [0,\infty)$, this theorem does not require any time regularity of the semigroup. Theorem~\ref{thm:lasota-yorke-uniform} was proved -- in a slightly more general version for semigroups which are merely positive and bounded, and over general measure spaces -- in \cite[Corollary~3.6]{GerlachLB} by M.\ Gerlach and the first-named of the present authors. There, the theorem was reduced to the classical theorem of Lasota and Yorke (which characterises strong convergence) by means of an ultrapower argument. For the convenience of the reader and in order to be more self-contained, we include a more direct proof of Theorem~\ref{thm:lasota-yorke-uniform} here which is quite close in spirit to the original proof of Lasota and Yorke. Our presentation follows mainly \cite[Theorem~3.2]{GerlachLB}, although we need certain estimates to be uniform in the present setting. At first we show the following lemma:

\begin{lemma} \label{lemma:maximal-uniform-lower-bound}
	In the situation of Theorem \ref{thm:lasota-yorke-uniform} assume that condition (ii) is satisfied and let $\emptyset \neq H \subseteq (L^1)_+$ denote the set of all uniform lower bounds for $\calT$. Then $H$ has a largest element $\hmax$; this element satisfies $0 < \norm{\hmax} \le 1$ and $T_t \hmax = \hmax$ for all $t \in \calJ$.
\end{lemma}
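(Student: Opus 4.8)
The plan is to analyse the structure of the set $H$ of all uniform lower bounds and to extract $\hmax$ as an element of maximal norm. First I would record two elementary properties of $H$. The key one is that $H$ is closed under finite suprema. This rests on the lattice identity $(T_t f - h_1 \vee h_2)^- = (T_t f - h_1)^- \vee (T_t f - h_2)^-$, which follows because taking the positive part is a lattice homomorphism. Combined with the $L^1$-estimate $\norm{u \vee v} \le \norm{u} + \norm{v}$ for $u,v \ge 0$, this yields $\sup_{\norm{f}=1}\norm{(T_t f - h_1 \vee h_2)^-} \le \sup_{\norm{f}=1}\norm{(T_t f - h_1)^-} + \sup_{\norm{f}=1}\norm{(T_t f - h_2)^-} \to 0$, so $h_1 \vee h_2 \in H$. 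Second, for any $h \in H$ and any $0 \le f$ with $\norm{f}=1$, the decomposition $h = h \wedge T_t f + (T_t f - h)^-$ together with $\norm{h \wedge T_t f} \le \norm{T_t f} = 1$ gives $\norm{h} \le 1 + \norm{(T_t f - h)^-}$; letting $t \to \infty$ shows $\norm{h} \le 1$ for every $h \in H$.

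With these in hand I would construct $\hmax$. Set $\alpha := \sup\{\norm{h} : h \in H\}$; then $0 < \alpha \le 1$, positivity holding because $H$ contains a non-zero element by hypothesis. I choose $h_n \in H$ with $\norm{h_n} \to \alpha$ and replace $h_n$ by $h_1 \vee \dots \vee h_n$, which by closure under suprema still lies in $H$, is increasing, and still has norm tending to $\alpha$. Since $L^1$ is a KB-space, the increasing norm-bounded sequence $(h_n)$ converges in norm to its supremum $\hmax$, with $\norm{\hmax} = \alpha \in (0,1]$. To see $\hmax \in H$, I would use the subadditivity estimate $(T_t f - \hmax)^- \le (\hmax - h_n) + (T_t f - h_n)^-$, whence $\sup_{\norm{f}=1}\norm{(T_t f - \hmax)^-} \le \norm{\hmax - h_n} + \sup_{\norm{f}=1}\norm{(T_t f - h_n)^-}$; choosing $n$ large and then $t$ large makes the right-hand side arbitrarily small. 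That $\hmax$ is the \emph{largest} element follows because any $g \in H$ gives $g \vee \hmax \in H$ with $\norm{g \vee \hmax} \le \alpha = \norm{\hmax}$, and an element $u \ge \hmax \ge 0$ with $\norm{u} = \norm{\hmax}$ must equal $\hmax$ in the AL-space $L^1$; hence $g \le \hmax$.

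For the invariance $T_t \hmax = \hmax$, my strategy is to show $T_s \hmax \in H$ for each $s \in \calJ$ and then invoke maximality. Using the semigroup law $T_t = T_s T_{t-s}$ for $t \ge s$, positivity of $T_s$ (which gives $(T_s x)^- \le T_s(x^-)$), and the contractivity $\norm{T_s y} \le \norm{y}$ of stochastic operators, I obtain for $t \ge s$ that $\norm{(T_t f - T_s \hmax)^-} = \norm{\bigl(T_s(T_{t-s} f - \hmax)\bigr)^-} \le \norm{(T_{t-s} f - \hmax)^-}$. Taking the supremum over unit $0 \le f$ and letting $t \to \infty$ (so that $t - s \to \infty$) shows $T_s \hmax \in H$, using $\hmax \in H$. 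Since $\hmax$ is the largest element, $T_s \hmax \le \hmax$; but $\norm{T_s \hmax} = \norm{\hmax}$ because $T_s$ is stochastic and $\hmax \ge 0$, so the AL-space argument again forces $T_s \hmax = \hmax$.

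The main obstacle is the passage from the behaviour of individual lower bounds to a genuinely maximal one: the crucial structural input is that $H$ is closed under finite suprema and that $L^1$, being a KB-space, allows the increasing maximising sequence to converge in norm, so that its limit is again a uniform lower bound. The invariance step is then comparatively soft, relying only on the semigroup property and the contractivity of stochastic operators; the one point to watch throughout is that all estimates are kept uniform in $f$, which is exactly what distinguishes this argument from the classical strong-convergence version of Lasota and Yorke.
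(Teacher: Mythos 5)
Your proof is correct and follows essentially the same route as the paper: closure of $H$ under finite suprema via the identity $(a - b\vee c)^- = (a-b)^- \vee (a-c)^-$, the norm bound $\norm{h}\le 1$ from stochasticity, a monotone norm-convergent approximation to produce $\hmax$, and strict monotonicity of the $L^1$-norm to upgrade $T_t\hmax \le \hmax$ to equality. The only cosmetic difference is that you extract $\hmax$ from a norm-maximising increasing sequence (using the KB/AL-property to identify the norm-largest element of $H$ with its largest element), whereas the paper takes the limit of the increasing net $(h)_{h\in H}$ directly; both arguments are sound.
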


For the proof we note that the norm on $L^1$ is \emph{strictly monotone}, meaning that $\norm{f} < \norm{g}$ whenever $0 \le f \le g$ and $f \not= g$.

\begin{proof}[Proof of Lemma~\ref{lemma:maximal-uniform-lower-bound}]
	One readily checks that $H$ is closed and as $\calT$ is stochastic we obtain $\norm{h} \le 1$ for all $h \in H$. Furthermore, the relation $(a - b \vee c)^- = (a - b)^- \vee (a - c)^- \le (a - b)^- + (a - c)^-$ for any three functions $a,b,c \in L^1$ implies that $h_1 \vee h_2 \in H$ for all $h_1, h_2 \in H$. Hence, $(h)_{h \in H} \subseteq H$ is an increasing and norm-bounded net. Using the fact that the $L^1$-norm is additive on the positive cone,
it is easy to see that $(h)_{h \in H}$ is a Cauchy net and thus convergent in $H$. Clearly, $\hmax := \lim_{h \in H} h \in H$ is the largest element of $H$ and satisfies $0 < \norm{\hmax} \le 1$ as asserted. It remains to prove that $T_t \hmax = \hmax$ for all $t \in \calJ$. It is easy to verify that $H$ is invariant under $\calT$ and hence $T_t \hmax \in H$ for all $t \in \calJ$. Since $\hmax$ is the largest element of $H$ we thus conclude that $T_t \hmax \le \hmax$, and since $\calT$ is stochastic and the $L^1$-norm is strictly monotone we obtain $T_t \hmax = \hmax$ for all $t \in \calJ$.
\end{proof}

Now we can prove Theorem~\ref{thm:lasota-yorke-uniform}. By $\one \in L^\infty := L^\infty(\Omega,\mu)$ we denote the constant function with value $1$.

\begin{proof}[Proof of Theorem \ref{thm:lasota-yorke-uniform}]
	(i) $\Rightarrow$ (ii): Assume that $T_t$ converges with respect to the operator norm to a limit operator $P$ of rank $1$. The operator $P$ is stochastic, so it is given by $Pf = \langle \one,f\rangle f_0$ for all $f \in L^1$, where $f_0 \ge 0$ is a function in $PL^1$ of norm $1$. In particular, we have $Pf=f_0$ for every normalized function $f\in (L^1)_+$ and thus
\[\sup\left\{ \norm{(T_t f - f_0)^-}: \; 0 \le f \in L^1, \; \norm{f} = 1 \right\} \le \norm{T_t-P} \to 0 \qquad \text{as} \qquad t \to \infty.\] In other words, $f_0$ is a non-zero uniform lower bound for $\calT$, which shows (ii).

	(ii) $\Rightarrow$ (i): Assume that (ii) holds. By Lemma \ref{lemma:maximal-uniform-lower-bound} there exists a largest uniform lower bound $\hmax$ for $\calT$, and the function $\hmax$ is non-zero and satisfies $T_t \hmax = \hmax$ for all $t \in \calJ$. In the following we show that $\norm{\hmax} = 1$. Assume to the contrary that there exists a number $\delta \in (0,1)$ such that $\norm{\hmax} = 1 - \delta$. We will prove that $(1 + \delta) \hmax$ is also a uniform lower bound for $\calT$, which contradicts the fact that $\hmax$ is the largest element of $H$. 
	
	So let $\epsilon > 0$. Since $\hmax$ is a uniform lower bound there exist a time $t_0 \in \calJ$ and vectors $e_f \ge 0$ of norm $\norm{e_f} < \epsilon$ such that $T_{t_0}f + e_f \ge \hmax$ for every $f \in (L^1)_+$ of norm $1$. For each normalized positive function $f$ we define $g_f := T_{t_0}f - \hmax + e_f \ge 0$; the norm of those vectors $g_f$ can be estimated by
	\begin{align*}
		\norm{g_f} = \langle \one, T_{t_0}f\rangle - \langle \one, \hmax\rangle + \langle \one, e_f\rangle \ge \norm{T_{t_0}f} - \norm{\hmax} = 1 - \norm{\hmax} = \delta.
	\end{align*}
	Therefore, we have $\sup_{f} \norm{(T_t \frac{g_f}{\delta} - \hmax)^-} \to 0$ as $t \to \infty$, and consequently, there exist another time $t_1 \in \calJ$ and vectors $\tilde{e}_f \ge 0$ of norm $\norm{\tilde{e}_f} < \epsilon$ such that $T_t g_f + \tilde{e}_f \ge \delta \hmax$ for all $t \in \calJ$ with $t \ge t_1$ and for all normalized functions $0 \le f \in L^1$.
	
	Now, fix a normalized function $0 \le f \in L^1$ and a time $t \ge t_1$ in $\calJ$. Using the definition of $g_f$ and the $\calT$-invariance of $\hmax$ we obtain
	\begin{align*}
		T_t T_{t_0}f = T_t g_f - T_t e_f + \hmax \ge (1 + \delta)\hmax - \tilde{e}_f - T_t e_f.
	\end{align*}
	We have $\norm{\tilde{e}_f + T_t e_f} < 2 \epsilon$, so we have shown that $\norm{\big(T_tf - (1 + \delta)\hmax \big)^-} < 2 \epsilon$ for all $t \in \calJ$ with $t \ge t_1+t_0$. Since $t_0$ and $t_1$ do not depend on the choice of $f$, it follows that $(1 + \delta)\hmax$ is indeed a uniform lower bound for $\calT$. As this contradicts the maximality of $\hmax$, we conclude that $\norm{\hmax} = 1$.

	Finally we prove that $T_t$ operator norm converges to the rank-$1$ operator $P$ defined by $Pg := \langle \one,g\rangle \hmax$ for all $g \in L^1$. For every normalized $0 \le f \in L^1$ and every time $t$ we have
	\begin{align*}
		& \norm{(T_t f - \hmax)^+} - \norm{(T_tf - \hmax)^-} = \langle \one, (T_tf - \hmax)^+ - (T_tf - \hmax)^-\rangle \\
		& = \langle \one, T_tf - \hmax\rangle = \norm{T_t f} - \norm{\hmax} = 0,
	\end{align*}
	as both $T_tf$ and $\hmax$ are of norm $1$; hence, $(T_tf - \hmax)^+$ and $(T_tf - \hmax)^-$ have the same norm, so the norm of $T_tf - \hmax$ equals twice the norm of $(T_tf - \hmax)^-$. Using that $Pf = \hmax$ for every normalized $0 \le f \in L^1$, we thus obtain
	\begin{align*}
		& \sup\left\{ \norm{T_t f - Pf}: \; 0 \le f \in L^1, \; \norm{f} = 1 \right\} \\
		& = 2 \sup\left\{ \norm{(T_t f - \hmax)^-}: \; 0 \le f \in L^1, \; \norm{f} = 1 \right\} \to 0
	\end{align*}
	as $t \to \infty$. From this, one easily derives that $T_t$ converges to $P$ with respect to the operator norm as $t \to \infty$.
\end{proof}

Theorems which derive convergence of a semigroup from the existence of lower bounds exist in a wide range of variations; for further results in $L^1$-spaces we refer, for instance, to \cite{Zalewska-Mitura1994, Ding2003, GerlachLB}, and for results on non-commutative $L^1$-spaces, or more generally on abstract state spaces, we refer for instance to \cite[Section~3.3]{Emelyanov2007} and \cite{Glueck2019a}. To the best of our knowledge, the only known uniform version of a lower bound convergence theorem so far is \cite[Corollary~3.6]{GerlachLB}, which we reproved (in the important special case of stochastic semigroups) in Theorem~\ref{thm:lasota-yorke-uniform} above.

\section{Proof of the main result} \label{section:proof-of-the-main-result}

The purpose of the section is to prove Theorem~\ref{thm:main}. We will need a few facts from Banach lattice theory in the proof. 

First, we note that the operator space $\calL(L^1)$ is itself a Banach lattice with respect to the operator norm; this is due to the special structure of $L^1$-spaces \cite[Theorem~IV.1.5]{Schaefer1974}. Moreover, we will need that an operator $I \in \calL(L^1)$ is an integral operator if and only if it is an element of the band in $\calL(L^1)$ that is generated by the finite-rank operators \cite[Proposition~IV.9.8]{Schaefer1974}.

We recall that a vector $g$ in a Banach lattice $E$ is called a \emph{quasi-interior point} if $g$ is positive and the principal ideal generated by $g$ is dense in $E$ (see for instance \cite[Section~II.6]{Schaefer1974} for a more detailed discussion). If $E = L^1$, then $g$ is a quasi-interior point if and only if $g$ is strictly positive almost everywhere; if $E = L^\infty$, then $g$ is a quasi-interior point if and only if there exists a number $\varepsilon > 0$ such that $g \ge \varepsilon \one$ (i.e., $g \gg 0$ in the notation introduced before Corollary~\ref{cor:spectral-condition}). One important property of quasi-interior points is that, if $g \in E$ is a quasi-interior point and $R \in \calL(E)$ is positive and non-zero, then $Rg \not= 0$.

\begin{proof}[Proof of Theorem~\ref{thm:main}]
	We first note that $0$ is a spectral value of $A$ since $\calT$ is stochastic. Moreover, $0$ is a pole of the resolvent $\Res(\argument,A)$ if and only if the Ces\`aro means of the semigroup converge uniformly as time tends to $\infty$; this follows from \cite[Theorem~V.4.10]{Engel2000}.
	
	``(i) $\Rightarrow$ (ii)'' If $T_t$ converges uniformly to an operator $P \in \calL(L^1)$ as $t \to \infty$, then so do the Ces\`aro means of the semigroup; hence, we conclude that $0$ is a pole of the resolvent of $A$. As $\calT$ is irreducible, the limit projection $P$ has rank-$1$ and is thus an integral operator; see for instance \cite[Proposition~C-III-3.5]{Arendt1986}. Now we use that $\calL(L^1)$ is a Banach lattice when endowed with the operator norm: this implies that
	\begin{align*}
		T_t \land P \to P
	\end{align*}
	with respect to the operator norm as $t \to \infty$. In particular, there exists a time $t_0 \in [0,\infty)$ such that $J := T_{t_0} \land P \not= 0$. Since the integral operators are a band in $\calL(L^1)$, $J$ is an integral operator, and obviously $0 \le J \le T_{t_0}$.
	
	``(ii) $\Rightarrow$ (iii)'' We show that there exists a time $t_1 \in (0,\infty)$ and a non-zero compact operator $K$ such that $0 \le K \le T_{t_1}$.	
	
	Since $J$ is an integral operator, there exists a finite-rank operator $F \in \calL(L^1)$ which is not disjoint to $J$. Moreover, the operator modulus $\modulus{F}$ of $F$ is dominated by a positive finite-rank operator $G \in \calL(L^1)$, and so we have $J \land G \not= 0$. The operator $J \land G$ itself might not be compact, but its square is compact according to \cite[Corollary~3.7.15(ii)]{Meyer-Nieberg1991}; since we do not know that $(J \land G)^2$ is non-zero though, we employ the following construction:
	
	Choose a positive function $f \in L^1$ such that $(J \land G) f \not= 0$. Since our semigroup $\calT$ is irreducible, the resolvent of its generators at the point $1$ sends every non-zero positive vector to a quasi-interior point of $(L^1)_+$, i.e., the function $\Res(1,A)(J \land G) f$ is strictly positive almost everywhere \cite[p.~306]{Arendt1986}. Using again that $J \land G$ is non-zero, this in turn implies that $(J \land G) \Res(1,A) (J \land G)f \not= 0$. The Laplace transform representation of $\Res(1,A)$ now implies that there exists a time $s \in [0,\infty)$ such that $(J \land G) T_s (J \land G) f \not= 0$. Again due to the irreducibility of $\calT$ we have $T_tg \not= 0$ for every non-zero vector $g \in (L^1)_+$ and for every time $t \in [0,\infty)$; see for instance \cite[Theorem~C-III-3.2(a)]{Arendt1986}. Hence, the operator $K := \left(T_s (J \land G) \right)^2$ sends $f$ to a non-zero vector and is thus non-zero.
	
	Since $T_s (J \land G)$ is dominated by the compact operator $T_s G$, it also follows that $K$ is compact \cite[Corollary~3.7.15(ii)]{Meyer-Nieberg1991}. Finally, we clearly have $0 \le K \le T_{2(s+t_0)}$, so the proof of this implication is complete with $t_1 := 2(s+t_0)$.	
	
	``(iii) $\Rightarrow$ (i)'' Let us denote the Ces{\`a}ro means of the semigroup by $C_t$, i.e., we set $C_tf := \frac{1}{t} \int_0^t T_s f \dx s$ for each $f \in L^1$ and each $t \in (0,\infty)$. As $0$ is a (first order) pole of the resolvent, $C_t$ converges with respect to the operator norm to an operator $P \in \calL(L^1)$ as $t \to \infty$.

	We note that the operator $P$ is stochastic and a projection onto the fixed space of $\calT$; moreover, $P$ coincides with the spectral projection of $A$ associated with the pole $0$. From \cite[Proposition~C-III.3.5(a) and~(d)]{Arendt1986} we obtain that
	\begin{align*}
		P = \one \otimes g,
	\end{align*}
	where $g \in (L^1)_+$ is a fixed point of $\calT$ and a quasi-interior point in $L^1$.
	
	As follows from \cite[Theorem~3.11]{Gerlach2019}, the assumptions on our semigroup together with the existence of the quasi-interior fixed point $g$ imply \emph{strong} convergence of $T_t$ as $t \to \infty$ (note that assumption~(a) in \cite[Theorem~3.11]{Gerlach2019} is satisfied since the norm on $L^1$ is strictly monotone, see \cite[Proposition~3.13(a)]{Gerlach2019}; alternatively, one can derive the validity of this assumption from the irreducibility of the semigroup $\calT$, see \cite[Proposition~3.13(c)]{Gerlach2019}). Clearly, the strong limit of $T_t$ as $t \to \infty$ coincides with $P$.
	
	We need to show is that the convergence of $T_t$ to $P$ actually takes place with respect to the operator norm. To this end, let us decompose each operator $T_t$ for $t \ge t_0$ into a sum of a positive compact operator and another positive operator. For the time $t_0$ we can simply set $R := T_{t_0} - K$ and thus obtain the decomposition $T_{t_0} = K + R$, where $K$ is positive and compact and where $R$ is positive. For $t > t_0$ we define
	\begin{align*}
		K_t := K T_{t - t_0} 
		\qquad \text{and} \qquad
		R_t := R T_{t - t_0}
	\end{align*}
	Then we indeed have $T_t = K_t + R_t$ and $K_t,R_t \ge 0$ for all times $t \in (t_0,\infty)$, and each operator $K_t$ is compact. 
	
	The main idea is now as follows: if we could find a time $t_1 > t_0$ for which we have $\norm{R_{t_1}} < 1$, then $T_{t_1}$ would be closer than $1$ to the compact operator, i.e., $T_{t_1}$ would be \emph{quasi-compact}; operator norm convergence of the semigroup would thus follow from classical Perron--Frobenius theory, see for instance \cite[Theorem~4]{Lotz1986} or \cite[Theorem~C-IV-2.1]{Arendt1986}.
	
	Under the given assumptions, it seems unclear how to show that $\norm{R_{t_1}} < 1$ for some time $t_1$, but we can prove a slightly weaker assertion, namely that there is a time $t_1 > t_0$ such that the product of $R_{t_1}$ and the Ces{\`a}ro mean $C_{t_1}$ has norm $<1$. To do so, we first observe that the identities
	\begin{align}
		\label{eq:property-of-decomposition}
		K_t T_s = K_{t+s} \qquad \text{and} \qquad R_t T_s = R_{t+s}
	\end{align}
	hold for all $t \in (t_0,\infty)$ and all $s \in [0,\infty)$; this will be useful in the sequel.
	
	Since the fixed point $g$ of $\calT$ is a quasi-interior point of $(L^1)_+$ and since $K$ is non-zero, we have $Kg \not= 0$. Let us set $\delta := \norm{Kg} > 0$. As indicated above, we now show that there exists a time $t_1 > t_0$ such that $\norm{R_{t_1} C_{t_1}} \le 1 - \delta/2$.
	
	Indeed, since $C_t$ converges with respect to the operator norm to $P = \one \otimes g$ as $t \to \infty$, there exists a time $t_1 > t_0$ for which we have $\norm{C_{t_1} - P} \le \delta / 2$. The operator $R_{t_1}$ is contractive (as it is positive and dominated by $T_{t_1}$), so the distance of $R_{t_1} C_{t_1}$ to $R_{t_1}P$ is not larger than $\delta/2$, either. On the other hand, the latter operator is given by
	\begin{align*}
		R_{t_1}P = \one \otimes R_{t_1}g = \one \otimes RT_{t_1-t_0}g = \one \otimes Rg,
	\end{align*}
	and thus has the norm
	\begin{align*}
		\norm{R_{t_1}P} = \norm{Rg} = \langle \one, T_{t_0}g\rangle - \langle \one, Kg\rangle = \norm{g} - \norm{Kg} = 1 - \delta.
	\end{align*}
	Thus, the norm of $R_{t_1}C_{t_1}$ is indeed not larger than $1-\delta/2$.
	
	Next we use this to estimate $\norm{R_sf}$ for every normalised $f \ge 0$ and for a certain ($f$-dependent) time $s$: for every vector $0 \le f \in L^1$ of norm $1$ we compute
	\begin{align*}
		1 - \frac{\delta}{2} & \, \ge \, \norm{R_{t_1} C_{t_1}f} = \norm{\frac{1}{t_1} \int_{0}^{t_1} R_{t_1}T_s f \dx s } \\
		& = \, \frac{1}{t_1} \int_0^{t_1} \norm{R_{t_1+s} f} \dx s = \frac{1}{t_1} \int_{t_1}^{2t_1} \norm{R_s f} \dx s;
	\end{align*}
	for the equality between both lines we used~\eqref{eq:property-of-decomposition} and the fact that the norm is additive on the positive cone of $L^1$. Hence, there exists a time $s_f \in [t_1,2t_1]$ such that $\norm{R_{s_f}f} \le 1 - \delta/2$.
	
	If $s_f$ was independent of $f$, then we would have found a time $s := s_f$ for which $\norm{R_s} < 1$, and the proof would be complete; but of course, this does not work, since $s_f$ is actually dependent on $f$. Fortunately though, each time $s_f$ is bounded from above by $2t_1$, and this will be sufficient to show now that the vector $\frac{\delta}{2}g$ is a non-zero uniform lower bound of our semigroup. Operator norm convergence of the semigroup will thus follow from Theorem~\ref{thm:lasota-yorke-uniform}.
	
	So fix $\varepsilon > 0$ and $0 \le f \in L^1$. Since $K$ is compact, the convergence of $T_t$ to $P$ as $t\to \infty$ is uniform on the image of the positive unit sphere of $L^1$ under $K$, i.e., there is a time $t_2 > 0$ such that $\norm{T_tK \tilde f - PK\tilde f} < \varepsilon$ for each $t \ge t_2$ and each $\tilde f \in (L^1)_+$ of norm $1$.
	
	We now show that, for $t \ge 2t_1 + t_2$, the negative part of $T_t f - \frac{\delta}{2}g$ has norm smaller than $\varepsilon$, which completes the proof. To this end, we define the vector $z_{f,t} := T_{t-s_f}KT_{s_f-t_0}f$ and make the following three observations which are valid for all times $t \ge 2t_1 + t_2$:
	\begin{enumerate}[\upshape (a)]
		\item We have $T_tf \ge z_{f,t}$. Indeed,
			\begin{align*}
				T_t f = T_{t-s_f}T_{s_f} f \ge T_{t-s_f} K_{s_f} f = z_{f,t}.
			\end{align*}
		\item The vector $z_{f,t}$ is closer than $\varepsilon$ to the vector $PKT_{s_f-t_0}f$. This follows since $t-s_f \ge t_2$ and since the vector $T_{s_f-t_0}f$ is positive and of norm $1$.
		\item The vector $PKT_{s_f-t_0}f$ dominates the vector $\frac{\delta}{2}g$. Indeed, we have
			\begin{align*}
				PKT_{s_f-t_0}f = (\one \otimes g) K_{s_f}f = \langle \one, K_{s_f}f \rangle g = \norm{K_{s_f}f}g,
			\end{align*}
			and the norm of $K_{s_f}f$ is at least $\delta/2$ since we have proved above that the norm of $\norm{R_{s_f}f}$ is no more than $1 - \delta/2$.
	\end{enumerate}
	Observations~(a)--(c) prove that we indeed have $\norm{\left( T_t f - \frac{\delta}{2}g \right)^-} < \varepsilon$ for each time $t \ge 2t_1 + t_2$, so $\frac{\delta}{2}g$ is a uniform lower bound for our semigroup, as claimed.
\end{proof}

\section{Proof of Corollary~\ref{cor:spectral-condition}} \label{section:proof-of-spectral-proposition}

The following proof is a combination of Theorem~\ref{thm:main}, several known results from the literature and standard arguments from operator theory on Banach lattices.

\begin{proof}[Proof of Corollary~\ref{cor:spectral-condition}]
	We show the implications ``(i) $\Leftrightarrow$ (iii)'', ``(ii) $\Leftrightarrow$ (iii)'' and ``(i) $\Rightarrow$ (iv) $\Rightarrow$ (vi) $\Rightarrow$ (v) $\Rightarrow$ (vi) $\Rightarrow$ (iii)''.
	
	``(i) $\Leftrightarrow$ (iii)'' This is part of Theorem~\ref{thm:main}.
	
	``(ii) $\Leftrightarrow$ (iii)'' Since the constant function $\one$ is a fixed point of the dual semigroup $\calT'$, it follows that $0$ is a spectral value of $A$. Moreover, if $0$ is a pole of the resolvent $\Res(\argument,A)$, then it is of order $1$ since the semigroup $\calT$ is bounded. Hence, the equivalence of~(i) and~(ii) follows from \cite[Theorem~V.4.10]{Engel2000}.
	
	``(i) $\Rightarrow$ (iv)'' Denote by $P \in \calL(L^1)$ the limit of $T_t$ as $t \to \infty$. Then $P$ is a stochastic operator and the spectral projection associated with the pole $0$ of $A$. The irreducibility of $\calT$ thus implies that $P$ is of the form $P = \one \otimes g$, where $0 \le g \in L^1$ is a quasi-interior point \cite[Proposition~C-III.3.5(a) and~(d)]{Arendt1986}.
	
	Now, let $I \subseteq L^\infty(\Omega,\mu)$ be a non-zero closed ideal which is invariant under the dual semigroup $\calT'$, and choose a non-zero vector $0 \le f \in I$. Since $T_t'$ converges to $P'$ with respect to the operator norm, we conclude that $\langle g, f\rangle \one = P'f \in I$. As $\langle g, f\rangle > 0$, it follows that $\one \in I$, so $I = L^\infty$.
	
	``(iv) $\Rightarrow$ (vi)'' Fix a non-zero function $0 \le f \in L^\infty$ and a number $\lambda \in (0,\infty)$. Using the Laplace transform representation of the resolvent, one can easily show that $T_t \Res(\lambda,A) \le e^{\lambda t} \Res(\lambda,A)$ for each $t \ge 0$, and hence the same inequality holds for the dual operators. So we have
	\begin{align*}
		T_t' \Res(\lambda,A)'f \le e^{\lambda t} \Res(\lambda,A)'f
	\end{align*}
	for each $t \ge 0$, which shows that the principal ideal in $L^\infty$ generated by the function $\Res(\lambda,A)'f$ is invariant under $\calT'$; hence, so its closure $I$. As $I$ contains the vector $\Res(\lambda,A)'f$, which is non-zero due to the injectivity of the operator $\Res(\lambda,A)'$, it follows from the irreducibility of $\calT'$ that $I = L^\infty$. This proves that $\Res(\lambda,A)'f$ is a quasi-interior point in $L^\infty$.
	
	``(vi) $\Rightarrow$ (v)'' This is obvious.
	
	``(v) $\Rightarrow$ (vi)'' Fix $\lambda > 0$ and a non-zero function $0 \le f \in L^\infty$, and let $I \subseteq L^\infty$ denote the closure of the principal ideal generated by $\Res(\lambda,A)'f$. We have to prove that $I = L^\infty$.
	
	For each $\mu > \lambda$ it follows from the resolvent identity that
	\begin{align*}
		0 \le \Res(\mu,A)'\Res(\lambda,A)'f = \frac{1}{\mu - \lambda}\Res(\lambda,A)'f - \frac{1}{\mu - \lambda} \Res(\mu,A)'f \le \frac{1}{\mu - \lambda} \Res(\lambda,A)'f.
	\end{align*}
	So the principal ideal generated by $\Res(\lambda,A)'f$ is invariant under $\Res(\mu,A)$ and hence, so is its closure $I$. Since the resolvent is analytic, it follows from the identity theorem for analytic functions (applied to the quotient space $L^\infty/I$) that $I$ is actually invariant under $\Res(\mu,A)'$ for every $\mu > 0$.
	
	But $\Res(\lambda,A)'$ is injective, so $\Res(\lambda,A)'f \not= 0$ and hence, assumption~(v) implies that there exists a number $\mu > 0$ such that $0 \ll \Res(\mu,A)' \Res(\lambda,A)'f \in I$. Therefore $\one \in I$, and we conclude that $I = L^\infty$.
	
	``(vi) $\Rightarrow$ (iii)'' The operator $\Res(1,A)'$ is positive, has spectral radius $1$ and is irreducible according to~(vi). Hence, it follows from~\cite[Corollary~2 on p.\,151 and Proposition~1 on p.\,146]{Lotz1981} that $1$ is a pole of the resolvent of $\Res(1,A)'$ and consequently, $1$ is also a pole of the resolvent of $\Res(1,A)$. Therefore, $0$ is a pole of the resolvent of $A$ \cite[Proposition~IV.1.18]{Engel2000}.
\end{proof}

We note that many of the implications in Corollary~\ref{cor:spectral-condition} remain correct without the assumption that $T_{t_0}$ dominates a non-trivial compact or integral operator. Of all implications that occurred in the proof above, only  the implication ``(iii) $\Rightarrow$ (i)'' needs this assumption. However, we do not know whether one can get from (ii), (iii), (v) or (vi) to (iv) without the detour via~(i).

\section{Outlook} \label{section:outlook}

Besides potential applications to mathematical biology and kinetic equations, Theorem~\ref{thm:main} might prove useful in the study of queuing systems. It is a common approach to analyse queuing systems and reliability models by means of stochastic $C_0$-semigroups on $L^1(\Omega,\mu)$-spaces; see for instance \cite{Gupur2011, Zheng2015, Haji2013, Gwizdz2019} for a few examples. In many cases the underlying measure space $(\Omega,\mu)$ contains an \emph{atom} (i.e., a measurable one point set $\{\omega\}$ with measure $0 < \mu(\{\omega\}) < \infty$), and as has been pointed out in \cite[Section~6.1]{Gerlach2019} the existence of such an atom implies that every irreducible stochastic semigroup on $(T_t)_{t \in [0,\infty)}$ contains a partial integral operator. Thus, Theorem~\ref{thm:main} shows that such a semigroup converges if and only if $0$ is a pole of the resolvent of the generator.

Another application is to show that finite networks flows, as for instance discussed in \cite{Kramar2005, Dorn2010}, automatically converge to an equilibrium if one introduces a mass buffer in one of the vertices. Such a result has been proved by the second named author in his Master's thesis \cite{Martin2018} and will be presented in a separate paper.

\subsection*{Acknowledgements}

A special case of Theorem~\ref{thm:main} was proved while the second-named author wrote his Master's thesis at the Institute of Applied Analysis at Ulm University in winter 2017/18 \cite{Martin2018}.

\bibliographystyle{plain}
\bibliography{literature}

\begin{thebibliography}{10}

\bibitem{Arendt2008}
W.~Arendt.
\newblock Positive semigroups of kernel operators.
\newblock {\em Positivity}, 12(1):25--44, 2008.

\bibitem{Arendt1986}
W.~Arendt, A.~Grabosch, G.~Greiner, U.~Groh, H.~P. Lotz, U.~Moustakas,
  R.~Nagel, F.~Neubrander, and U.~Schlotterbeck.
\newblock {\em One-parameter semigroups of positive operators}, volume 1184 of
  {\em Lecture Notes in Mathematics}.
\newblock Springer-Verlag, Berlin, 1986.

\bibitem{Banasiak2012}
Jacek Banasiak, Katarzyna Pich\'or, and Ryszard Rudnicki.
\newblock Asynchronous exponential growth of a general structured population
  model.
\newblock {\em Acta Appl. Math.}, 119:149--166, 2012.

\bibitem{Bobrowski2007}
Adam Bobrowski, Tomasz Lipniacki, Katarzyna Pich\'or, and Ryszard Rudnicki.
\newblock Asymptotic behavior of distributions of m{RNA} and protein levels in
  a model of stochastic gene expression.
\newblock {\em J. Math. Anal. Appl.}, 333(2):753--769, 2007.

\bibitem{Ding2003}
Yiming Ding.
\newblock The asymptotic behavior of {F}robenius-{P}erron operator with local
  lower-bound function.
\newblock {\em Chaos Solitons Fractals}, 18(2):311--319, 2003.

\bibitem{Dorn2010}
B.~Dorn, M.~Kramar~Fijav{\v{z}}, R.~Nagel, and A.~Radl.
\newblock The semigroup approach to transport processes in networks.
\newblock {\em Phys. D}, 239(15):1416--1421, 2010.

\bibitem{Du2011}
Nguyen~Huu Du and Nguyen~Hai Dang.
\newblock Dynamics of {K}olmogorov systems of competitive type under the
  telegraph noise.
\newblock {\em J. Differential Equations}, 250(1):386--409, 2011.

\bibitem{Dunford1958}
N.~Dunford and J.~T. Schwartz.
\newblock {\em Linear {O}perators. {I}. {G}eneral {T}heory}.
\newblock Interscience Publishers, Inc., New York, 1958.

\bibitem{Emelyanov2007}
E.~Yu. Emel'yanov.
\newblock {\em Non-spectral asymptotic analysis of one-parameter operator
  semigroups}, volume 173 of {\em Operator Theory: Advances and Applications}.
\newblock Birkh\"auser Verlag, Basel, 2007.

\bibitem{Engel2000}
K.-J. Engel and R.~Nagel.
\newblock {\em One-parameter semigroups for linear evolution equations}, volume
  194 of {\em Graduate Texts in Mathematics}.
\newblock Springer-Verlag, New York, 2000.
\newblock With contributions by S. Brendle, M. Campiti, T. Hahn, G. Metafune,
  G. Nickel, D. Pallara, C. Perazzoli, A. Rhandi, S. Romanelli and R.
  Schnaubelt.

\bibitem{Gerlach2013}
M.~Gerlach.
\newblock On the peripheral point spectrum and the asymptotic behavior of
  irreducible semigroups of {H}arris operators.
\newblock {\em Positivity}, 17(3):875--898, 2013.

\bibitem{Gerlach2017}
Moritz Gerlach and Jochen Gl\"uck.
\newblock On a convergence theorem for semigroups of positive integral
  operators.
\newblock {\em C. R. Math. Acad. Sci. Paris}, 355(9):973--976, 2017.

\bibitem{GerlachLB}
Moritz {Gerlach} and Jochen {Gl\"uck}.
\newblock {Lower bounds and the asymptotic behaviour of positive operator
  semigroups.}
\newblock {\em {Ergodic Theory Dyn. Syst.}}, 38(8):3012--3041, 2018.

\bibitem{Gerlach2019}
Moritz {Gerlach} and Jochen {Gl\"uck}.
\newblock {Convergence of positive operator semigroups.}
\newblock {\em {Trans. Am. Math. Soc.}}, 372(9):6603--6627, 2019.

\bibitem{Glueck2019}
Jochen Gl{\"u}ck and Markus Haase.
\newblock {\em Asymptotics of Operator Semigroups via the Semigroup at
  Infinity}, pages 167--203.
\newblock Springer International Publishing, Cham, 2019.

\bibitem{Glueck2019a}
Jochen {Gl\"uck} and Manfred P.~H. {Wolff}.
\newblock {Long-term analysis of positive operator semigroups via asymptotic
  domination.}
\newblock {\em {Positivity}}, 23(5):1113--1146, 2019.

\bibitem{Greiner1982a}
G.~Greiner.
\newblock Spektrum und {A}symptotik stark stetiger {H}albgruppen positiver
  {O}peratoren.
\newblock {\em Sitzungsber. Heidelb. Akad. Wiss. Math.-Natur. Kl.}, pages
  55--80, 1982.

\bibitem{Gupur2011}
Geni {Gupur}.
\newblock {\em {Functional analysis methods for reliability models.}},
  volume~6.
\newblock Basel: Birkh\"auser, 2011.

\bibitem{Gwizdz2019}
Piotr {Gwi\.zd\.z}.
\newblock {Applications of stochastic semigroups to queueing models.}
\newblock {\em {Ann. Math. Sil.}}, 33:121--142, 2019.

\bibitem{Haji2013}
Abdukerim Haji and Agnes Radl.
\newblock A semigroup approach to the {G}nedenko system with single vacation of
  a repairman.
\newblock {\em Semigroup Forum}, 86(1):41--58, 2013.

\bibitem{Kramar2005}
Marjeta Kramar and Eszter Sikolya.
\newblock Spectral properties and asymptotic periodicity of flows in networks.
\newblock {\em Math. Z.}, 249(1):139--162, 2005.

\bibitem{Lasota1982}
A.~Lasota and James~A. Yorke.
\newblock Exact dynamical systems and the {F}robenius-{P}erron operator.
\newblock {\em Trans. Amer. Math. Soc.}, 273(1):375--384, 1982.

\bibitem{Lotz1981}
Heinrich~P. {Lotz}.
\newblock {Uniform ergodic theorems for Markov operators on C(X).}
\newblock {\em {Math. Z.}}, 178:145--156, 1981.

\bibitem{Lotz1986}
Heinrich~P. Lotz.
\newblock Positive linear operators on {$L^p$} and the {D}oeblin condition.
\newblock In {\em Aspects of positivity in functional analysis ({T}\"ubingen,
  1985)}, volume 122 of {\em North-Holland Math. Stud.}, pages 137--156.
  North-Holland, Amsterdam, 1986.

\bibitem{Mackey2013}
M.~C. Mackey, M.~Tyran-Kami\'nska, and R.~Yvinec.
\newblock Dynamic behavior of stochastic gene expression models in the presence
  of bursting.
\newblock {\em SIAM J. Appl. Math.}, 73(5):1830--1852, 2013.

\bibitem{Martin2018}
Florian~G. Martin.
\newblock Positive operator semigroups and long term behavior of buffered
  network flows.
\newblock Master's thesis, Universit\"at Ulm, 2018.

\bibitem{Meyer-Nieberg1991}
P.~Meyer-Nieberg.
\newblock {\em Banach lattices}.
\newblock Universitext. Springer-Verlag, Berlin, 1991.

\bibitem{Mokhtar-Kharroubi1997}
M.~{Mokhtar-Kharroubi}.
\newblock {\em {Mathematical topics in neutron transport theory: new
  aspects.}}, volume~46.
\newblock Singapore: World Scientific, 1997.

\bibitem{Mokhtar-Kharroubi2014}
Mustapha {Mokhtar-Kharroubi}.
\newblock {On \(L^1\) exponential trend to equilibrium for conservative linear
  kinetic equations on the torus.}
\newblock {\em {J. Funct. Anal.}}, 266(11):6418--6455, 2014.

\bibitem{Mokhtar-Kharroubi2017}
Mustapha {Mokhtar-Kharroubi} and Ryszard {Rudnicki}.
\newblock {On asymptotic stability and sweeping of collisionless kinetic
  equations.}
\newblock {\em {Acta Appl. Math.}}, 147(1):19--38, 2017.

\bibitem{Pazy1983}
A.~Pazy.
\newblock {\em Semigroups of linear operators and applications to partial
  differential equations}, volume~44 of {\em Applied Mathematical Sciences}.
\newblock Springer-Verlag, New York, 1983.

\bibitem{PichorPreprint}
Katarzyna Pich\'or and Ryszard Rudnicki.
\newblock Dynamics of antibody levels: asymptotic properties.
\newblock Preprint. Available online from https://arxiv.org/abs/2003.00271v1.

\bibitem{Pichor2000}
Katarzyna Pich{\'o}r and Ryszard Rudnicki.
\newblock Continuous {M}arkov semigroups and stability of transport equations.
\newblock {\em J. Math. Anal. Appl.}, 249(2):668--685, 2000.

\bibitem{Pichor2016}
Katarzyna Pich{\'o}r and Ryszard Rudnicki.
\newblock Asymptotic decomposition of substochastic operators and semigroups.
\newblock {\em J. Math. Anal. Appl.}, 436(1):305--321, 2016.

\bibitem{Pichor2018}
Katarzyna Pich\'or and Ryszard Rudnicki.
\newblock Stability of stochastic semigroups and applications to {S}tein's
  neuronal model.
\newblock {\em Discrete Contin. Dyn. Syst. Ser. B}, 23(1):377--385, 2018.

\bibitem{Schaefer1974}
H.~H. Schaefer.
\newblock {\em Banach lattices and positive operators}.
\newblock Springer-Verlag, New York, 1974.
\newblock Die Grundlehren der mathematischen Wissenschaften, Band 215.

\bibitem{Williams1967}
David {Williams}.
\newblock {Uniform ergodicity in Markov chains.}
\newblock 1967.
\newblock Proc. 5th Berkeley Sympos. math. Statist. Probab., Univ. Calif.
  1965/1966, 2, Part 2, 187-191 (1967).

\bibitem{Zalewska-Mitura1994}
Anna Zalewska-Mitura.
\newblock A generalization of the lower bound function theorem for {M}arkov
  operators.
\newblock {\em Univ. Iagel. Acta Math.}, (31):79--85, 1994.

\bibitem{Zheng2015}
Fu~Zheng and Bao-Zhu Guo.
\newblock Quasi-compactness and irreducibility of queueing models.
\newblock {\em Semigroup Forum}, 91(3):560--572, 2015.

\end{thebibliography}

\end{document}